\title[Poisson's fundamental theorem of calculus]{Poisson's fundamental theorem of calculus via Taylor's formula}
\theoremstyle{theorem}
\newtheorem*{theorem}{Theorem}
\theoremstyle{definition}
\newtheorem*{definition}{Definition}
\newtheorem*{example}{Example}
\newtheorem*{remark}{Remark}
\author{Patrik Nystedt}
\address{University West,
Department of Engineering Science, 
SE-461 86 Trollh\"{a}ttan, Sweden}
\email{patrik.nystedt@hv.se}
\begin{document}

\maketitle

\begin{abstract}
We use Taylor's formula with Lagrange remainder to make a modern adaptation of Poisson's proof 
of a version of the fundamental theorem of calculus in the case when the integral is defined by Euler sums,
that is Riemann sums with left (or right) endpoints which are equally spaced.
We discuss potential benefits for such an approach in basic calculus courses.
\end{abstract}

\section{Introduction}

Historians \cite[p. 235]{edwards1979} argue that Leibniz traced back his 
inspiration for the cal\-culus and, in particular, for the result that nowadays
is called the fundamental theorem of calculus (FTC), from a simple observation concerning sequences
he made in his youth.
Namely, if we are given a sequence $F_0,F_1,F_2,\ldots,F_n$ and we define
the sequence of differences $f_1 = F_1-F_0, f_2 = F_2-F_1, \ldots, f_n = F_n - F_{n-1}$,
then $\sum_{k=1}^n f_k = F_n - F_0$. 
In other words, the sum of the differences of the consecutive terms in a sequence
equals the difference between the last and the first term.
Leibniz then applied this observation
to the setting of a function $F$ with derivative $f$ and considered an infinite sum
of infinitesimals so that $\int_a^b dF = F(b)-F(a)$ or, in other words,
the version of the FTC presented in most calculus books today (see e.g. \cite{adams2006,hass2017,stewart2015}), namely
\begin{equation*}
\int_a^b f(x) \ dx = F(b) - F(a).
\end{equation*}
From a modern point of view, Leibniz's argument is of course incomplete,
since an infinite sum of infinitesimals makes no sense (unless one
uses Robinson's non-standard analysis \cite{robinson1966}).
The aim of this note is to show that if we define the integral as a limit of Euler sums,
i.e. Riemann sums with equally spaced intervals with left endpoints,
then we can adapt an argument of Poisson to easily prove a useful enough 
version of the FTC via Taylor's formula with Lagrange
remainder based on Leibniz's simple
observation concerning sums of differences.
The proof also yields an error approximation for the corresponding Euler sums.
It seems to the author of the present article that this set up for the
FTC is not so well known and even, in some sense, neglected, in favor for the highly abstract integral defined by general Riemann sums, 
but that it would be suitable for introductory calculus courses thanks to its simplicity.

\section{The integral}

In an introductory calculus course we wish to present the FTC in a reasonably rigorous fashion
and at the same time not making our definitions and proofs too abstract.
As many teachers have experienced, this is not so easy.
Indeed, there is ample evidence 
(see e.g. \cite{jones2013,orton1983,sealey2014,thompson1994}) that
students have great difficulties understanding the meaning
of the integral and, even more so, comprehending why the FTC holds.
It is, however, not hard to imagine why this is so since the integral has gradually developed as a concept over
a period of thousands of years
and the modern textbook version of the Darboux integral \cite{darboux1875} as defined by upper and lower Riemann sums 
is, after all, only roughly 150 years old. All of the earlier versions of the integral have, from a modern point of view, flaws.
Indeed, Riemann himself, in his seminal paper \cite{riemann1867}, neglects to point out 
the completeness property of the real numbers.
Cauchy \cite{cauchy1823}, upon whose ideas Riemann based his contribution, only used left endpoint
subdivisions of the interval in his definition of the integral.
Going back even further, as we mentioned above, Leibniz considered the integral 
to be sums of infinitesimals, or, in his own words in a letter to l'Hospital:
''...to find quadratures is nothing else but to find sums, provided that one supposes that the
differences are incomparably small'' \cite[p. 245]{edwards1979}.
Before that, Cavalieri \cite[p. 104]{edwards1979} in his work
''Geometry of indivisibles'' regarded an area
as consisting of parallel and equidistant line segments.

So how should we define the integral in an introductory calculus course?
Well, one problem with the set up
using general Riemann sums is that the associated integral then is not a limit
in the usual sense of {\it sequences}, that our students are used to, but rather a limit of a {\it net} \cite{olmstead1961}.
Not only is such a definition unsuitable for concrete calculations, for instance 
using computer simulations, but also highly abstract.
A lot of teachers will, nonetheless, first define general Riemann sums but then 
almost immediately say that they only will be using equally spaced intervals.
Then, in concrete examples, they will specialize this even further by using left (or right) endpoint intervals. 
We suggest that the integral, from a pedagogical perspective, should be defined in this way.
This puts as somewhere in between Leibniz's infinitesimals and Riemann's formal definition.
Since this definition, when interpreted as a method to calculate inte\-grals, coincides with an 
algorithm that Euler \cite[Part I, Section I, Chapter 7]{euler1768} suggested for calculating approximations of integrals,
we will henceforth call such sums ''Euler sums'', to distinguish them from general Riemann sums.
In the sequel, we let $\mathbb{Z}$ denote the integers and we let $\mathbb{N}$ denote the set 
of natural numbers $\{ 1,2,3,\ldots \}$.

\begin{definition}\label{defintegral}
Suppose that $f$ is a real valued function defined on an interval $[a,b]$.
For all $n \in \mathbb{N}$ and all $k \in \mathbb{Z}$, we put $\Delta x = (b-a)/n$
and $x_k = a + k \Delta x$.
We say that $I_n = \sum_{k=0}^{n-1} f( x_k ) \Delta x$ is the $n^{\rm th}$ {\it Euler sum} of $f$ on $[a,b]$
and we say that $f$ is {\it integrable} on $[a,b]$
if the limit $I = \lim_{n \to \infty} I_n$ exists.
In that case, we call $I$ the {\it integral} of $f$ on $[a,b]$ and we write
this symbolically as  $\int_a^b f(x) \ dx = I$.
\end{definition}

The above definition is mathematically crystal clear and
the Euler sums are easy for students to calculate in particular cases.

\begin{example}
If $f(x) = x$, $a=0$ and $b=1$, then, using a simple computer program, the students will find that 
$I_1 = 0$, $I_{10} = 0.45$, $I_{100} = 0.495$, $I_{1000} = 0.4995$ and $I_{10000} = 0.49995$,
which suggests that $I = 1/2$.
A teacher (or a clever student) can then verify this hypothesis using limits of arithmetic sums
in the standard formal way 
\begin{eqnarray*}
\int_0^1 x \ dx & = & \lim_{n \to \infty} I_n = \lim_{n \to \infty} \sum_{k=0}^{n-1} \frac{k}{n} \cdot \frac{1}{n} 
= \lim_{n \to \infty} \frac{1}{n^2} \sum_{k=0}^{n-1} k \\
                   & = & \lim_{n \to \infty} \frac{(n-1)n}{2 n^2} = \lim_{n \to \infty} \frac{1}{2} - \frac{1}{2n} = \frac{1}{2}.
\end{eqnarray*}
\end{example}

\section{Poisson's fundamental theorem of calculus}

The first person to use the adjective ''fundamental'' for a result related
to the FTC was, according to \cite{bressoud2011}, the French mathematician Poisson.
In fact, Poisson refers to the equation $\int f(x) \ dx = F(b)-F(a)$ as 
''the fundamental proposition of the theory of definite integrals'' (see loc. cit.).
Interestingly, Poisson seems also to be the first person to give a formal
proof of a version of the FTC in the case when the sums are of Euler's type.
In his proof Poisson assumes that a primitive function exists
and that it has a power series expansion over the interval (see \cite[p. 192]{grabiner1983} or 
Poisson's original paper \cite{poisson1820}).
Investigating Poisson's proof in detail reveals that the underlying idea 
is to approxi\-mate the function by a straight line, carefully
keeping track of the error, that is Taylor's formula with Lagrange remainder.
We now state and prove a modern version of the FTC, using Euler sums, which is based on Poisson's ideas,
assuming minimal required regularity of the functions involved for the proof to go through.

\begin{theorem}\label{thmftc}
If $F$ is a real valued function defined on $[a,b]$ such that its first derivative
exists and is continuous on $[a,b]$, and its second derivative exists and is bounded on $(a,b)$,
then $f=F'$ is integrable on $[a,b]$ and $\int_a^b f(x) dx = F(b)-F(a).$ 
\end{theorem}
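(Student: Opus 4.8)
The plan is to combine Leibniz's telescoping observation from the introduction with Taylor's formula with Lagrange remainder applied on each subinterval of the partition, and then to control the accumulated error using the boundedness of $F''$. First I would fix $n \in \mathbb{N}$, set $\Delta x = (b-a)/n$ and $x_k = a + k\Delta x$ as in Definition~\ref{defintegral}, and note that $x_0 = a$ and $x_n = b$. Since $F''$ is bounded on $(a,b)$, I would also fix a constant $M > 0$ with $|F''(x)| \le M$ for all $x \in (a,b)$.

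The first step is to rewrite the exact increment $F(b) - F(a)$ by telescoping, exactly as in Leibniz's observation:
\[
F(b) - F(a) = \sum_{k=0}^{n-1}\bigl(F(x_{k+1}) - F(x_k)\bigr).
\]
The second, and decisive, step is to apply Taylor's formula with Lagrange remainder to each term. On every subinterval $[x_k, x_{k+1}] \subseteq [a,b]$ the hypotheses are precisely what is needed: $F'$ is continuous on the closed interval and $F''$ exists on the open interval $(x_k, x_{k+1}) \subseteq (a,b)$. Hence there is a point $\xi_k \in (x_k, x_{k+1})$ with
\[
F(x_{k+1}) - F(x_k) = f(x_k)\,\Delta x + \frac{1}{2}F''(\xi_k)(\Delta x)^2.
\]

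Summing these identities over $k = 0, \ldots, n-1$ and recognizing the first sum as the Euler sum $I_n$, I would obtain
\[
F(b) - F(a) = I_n + \frac{(\Delta x)^2}{2}\sum_{k=0}^{n-1} F''(\xi_k).
\]
It then remains to estimate the remainder. Using $|F''(\xi_k)| \le M$ for each of the $n$ terms, together with $\Delta x = (b-a)/n$, I would get
\[
\bigl| F(b) - F(a) - I_n \bigr| \le \frac{n M (\Delta x)^2}{2} = \frac{M(b-a)^2}{2n}.
\]
Since the right-hand side tends to $0$ as $n \to \infty$, this shows that $\lim_{n\to\infty} I_n$ exists and equals $F(b) - F(a)$; by Definition~\ref{defintegral} this simultaneously establishes that $f$ is integrable and that $\int_a^b f(x)\,dx = F(b) - F(a)$. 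As a bonus, the displayed inequality is exactly the promised error approximation for the Euler sums.

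I do not expect a serious obstacle here, since the telescoping and the Lagrange remainder do almost all of the work. The one point requiring care is matching the regularity assumptions to Taylor's theorem on each subinterval: continuity of $F'$ on the closed interval together with mere existence of $F''$ on the open interval are exactly the hypotheses under which the Lagrange form of the remainder holds, which explains the asymmetry in the statement between $[a,b]$ and $(a,b)$. The only genuinely quantitative input is the global bound $M$ on $|F''|$, which is what converts a sum of $n$ second-order remainders, each of size $O((\Delta x)^2) = O(1/n^2)$, into a total error of order $1/n$; without boundedness of $F''$ this accumulation step would fail.
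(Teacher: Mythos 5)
Your proposal is correct and is essentially the paper's own argument: Taylor's formula with Lagrange remainder on each subinterval, Leibniz's telescoping of the increments $F(x_{k+1})-F(x_k)$, and the bound $M(b-a)^2/(2n)$ on the accumulated remainder, which is exactly the error estimate recorded in the paper's remark. The only difference is cosmetic---you expand $F(b)-F(a)$ and compare it to $I_n$, while the paper solves for $f(x_k)\,\Delta x$ and takes the limit of the Euler sums directly---but the underlying identities and estimates are identical.
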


\begin{proof} 
We use the notation introduced earlier.
From Taylor's formula with Lagrange remainder (see e.g. \cite{adams2006,hass2017,olmstead1961} or the next section), 
it follows that $$F( x_{k+1} ) = F( x_k ) + f( x_k ) \Delta x + f'(c) \frac{(\Delta x)^2}{2}$$
for some $c \in (x_k , x_{k+1})$, depending on $k$ and $\Delta x$,
for $k \in \{ 0,\ldots,n-1 \}$. Re\-arranging this equality yields that
$$f( x_k ) \Delta x = F( x_{k+1} ) - F( x_k ) - f'(c) \frac{(\Delta x)^2}{2}.$$
Using Leibniz's observation concerning sums of differences, we therefore get that
\begin{eqnarray*}
\int_a^b f(x) \ dx & = & 
\lim_{n \to \infty} \sum_{k=0}^{n-1} f( x_k ) \Delta x \\
& = & \lim_{n \to \infty } \left[ \sum_{k=0}^{n-1} F( x_{k+1} ) - F( x_k ) - \frac{(\Delta x)^2}{2} \sum_{k=0}^{n-1} f'(c) \right] \\
& = & F(b)-F(a) + \lim_{n \to \infty} \frac{(b-a)^2}{2n^2} \sum_{k=0}^{n-1} f'(c) \\
& = & F(b) - F(a)
\end{eqnarray*}
since the triangle inequality implies that
\begin{eqnarray*}
\lim_{n \to \infty} \left| \frac{(b-a)^2}{2n^2} \sum_{k=0}^{n-1} f'(c) \right| \leq 
\lim_{n \to \infty} \frac{M(b-a)^2}{2n} = 0
\end{eqnarray*}
for any $M$ satisfying $|f'(x)| \leq M$ when $a < x < b$.
\end{proof}

\begin{remark}
From the above proof, we immediately get the error bound 
$$|I - I_n| \leq \frac{M(b-a)^2}{2n},$$
for all $n \in \mathbb{N}$,
where $M = {\rm sup} \{ \ |f'(x)| \ ; \ a < x < b \ \}$,
for the $n^{\rm th}$ Euler sum.
\end{remark}

\section{Taylor's formula}

The experienced teacher might argue that Poisson's proof of the FTC is short
since it uses Taylor's formula with Lagrange remainder, a result that not always is proved in introductory
calculus courses because of its difficulty.
For instance, in \cite{adams2006} a 
rather complicated proof by induction using a generalized mean value theorem is provided.
In \cite{hass2017} a proof is given which uses Rolle's theorem repeatedly.
In \cite{stewart2015} the Lagrange form of the remainder term is not obtained at all, but is instead
replaced by an inequality which, however, is proved {\it using} the FTC.
We propose the following short and elegant proof which only uses Rolle's theorem once.
Note that the idea for this proof certainly is not new. 
Indeed, it can be found in one of G. H. Hardy's books \cite{hardy1908} and also in several 
popular textbooks from the 1960:s (see e.g. \cite{olmstead1961}).

\begin{theorem}\label{thmtaylor}
Let $n$ be a non-negative integer. 
If $f$ is a real-valued function defined on $[a,b]$ such that its $n^{th}$ derivative exists,
is continuous on $[a,b]$, and is differentiable on $(a,b)$,
then there exists $c \in (a,b)$ such that
$$f(b) = \sum_{j=0}^n \frac{ f^{(j)}(a) }{ j! } (b-a)^i + \frac{ f^{(n+1)}(c) }{ (n+1)! } (b-a)^{n+1}.$$
\end{theorem}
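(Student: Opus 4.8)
The plan is to reduce the statement to a single application of Rolle's theorem by freezing the endpoint $b$ and letting the base point vary. Specifically, I would introduce a constant $K$ and the auxiliary function
$$\phi(x) = f(b) - \sum_{j=0}^n \frac{f^{(j)}(x)}{j!}(b-x)^j - K \frac{(b-x)^{n+1}}{(n+1)!},$$
defined for $x \in [a,b]$, where $K$ is chosen so that $\phi(a) = 0$; this is possible, and determines $K$ uniquely, since the coefficient $(b-a)^{n+1}/(n+1)!$ of $K$ is nonzero. The point of this construction is that the conclusion of the theorem is precisely the assertion that this particular $K$ equals $f^{(n+1)}(c)$ for some $c \in (a,b)$.

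First I would verify the two endpoint values. At $x = b$ every summand with $j \geq 1$ vanishes because of the factor $(b-x)^j$, the $j=0$ summand contributes $f(b)$, and the $K$-term vanishes, so $\phi(b) = 0$ automatically. Combined with the choice $\phi(a) = 0$, Rolle's theorem then yields some $c \in (a,b)$ with $\phi'(c) = 0$, provided $\phi$ is continuous on $[a,b]$ and differentiable on $(a,b)$. These regularity requirements are exactly matched by the hypotheses: continuity of $f^{(n)}$ on $[a,b]$ guarantees continuity of $\phi$ there, while differentiability of $f^{(n)}$ on $(a,b)$ is precisely what permits the term $f^{(n+1)}$ to appear when differentiating $\phi$ on the open interval.

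The crux of the argument is the computation of $\phi'$. Differentiating the sum term by term via the product rule, each summand produces two pieces, and I expect the resulting expression to telescope: the piece $\tfrac{f^{(j+1)}(x)}{j!}(b-x)^j$ coming from the $j$-th summand cancels against the piece $-\tfrac{f^{(j+1)}(x)}{j!}(b-x)^j$ coming from the $(j+1)$-th summand. After all cancellations only the top term survives, and together with the derivative of the $K$-term this should give
$$\phi'(x) = \frac{(b-x)^n}{n!}\bigl[K - f^{(n+1)}(x)\bigr].$$
This telescoping is the step I expect to demand the most care, since one must track the factorials and the powers of $(b-x)$ correctly across the entire sum and confirm that nothing but the highest-order term remains. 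Once this formula is established, evaluating at $x = c$ and noting that $(b-c)^n \neq 0$ forces $K = f^{(n+1)}(c)$; substituting this value of $K$ back into the defining equation $\phi(a) = 0$ then reproduces exactly the claimed Taylor expansion with Lagrange remainder.
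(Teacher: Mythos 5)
Your proposal is correct and follows essentially the same route as the paper's own proof: the same auxiliary function (your $K/(n+1)!$ is the paper's constant $R$), a single application of Rolle's theorem after checking that both endpoint values vanish, and the same telescoping computation of the derivative. Your write-up is in fact somewhat more careful than the paper's, which leaves the telescoping implicit and contains minor index typos, but there is no substantive difference in approach.
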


\begin{proof}
There is a real number $R$ such that
$$f(b) = \sum_{j=0}^n \frac{ f^{(j)}(a) }{ j! } (b-a)^j + R (b-a)^{n+1}.$$
To find this $R$, we use an auxiliary function,
$$g(x) = f(b) - \sum_{i=0}^n \frac{ f^{(i)}(x) }{ j! } (b-x)^j - R (b-x)^{n+1}.$$
Then, clearly, $g(a) = g(b) = 0$. Rolle's theorem implies that there exists $c \in (a,b)$ satisfying 
$$0 = g'(c) = -\frac{ f^{(n+1)}(c) }{ n! } (b-c)^n + R(n+1)(b-c)^n.$$
Solving for $R$ gives us $R = f^{n+1}(c)/(n+1)!$
to conclude the proof.
\end{proof}

\section{Discussion}

In this article we have presented a version of the FTC using a version of the 
integral defined by limits of Euler sums.
We feel that such an approach would be beneficial for students studying calculus 
for the first time, for many reasons.
First of all, the students can, using a computer program, easily calculate the Euler sums
to formulate hypotheses concerning integrals, for instance the values of $\int_0^1 x^n \ dx$ for $n \in \mathbb{N}$.
Secondly, as we have shown above, it is easy to prove the FTC using Euler sums.
Indeed, it is easy to compare our approach to the FTC with the complexity of the classical one
using general Riemann sums. For instance, in Tao's book \cite[p. 306-342]{tao2006} it takes more than 30 pages
of dense mathematics to rigorously define Riemann sums and to prove the FTC,
not including the fact that continuous functions on compact intervals are uniformly continuous.
No wonder, we as teachers have to cheat somewhere taking this path.
In our simplified approach it us takes one page of simple calculations to reach our goal - without any cheating.
Also, our proof uses Leibniz's simple observation on sums of differences.
Therefore, a teacher can set up a lecture concerning the FTC by first explaining this relation in an informal
way using infinitesimals, as we did in the introduction, and then, in the formal proof, point out exactly where
Leibniz's observation is used. Leibniz was not wrong, he simply did not have 
the tools to make a careful analysis of the theorem.

There are, of course, down sides to our suggested approach.
Some students might ask why we consider left intervals, and not right intervals?
Or even midpoint intervals? If that happens, or if you raise this question yourself 
in a lecture, then this is a good oppor\-tunity to ask, does this really matter?
That is, if we would have defined the integral in either of these ways, do we get the
same set of integrable functions? If so, do these integrals have the same value?
For the functions satisfying the regularity conditions in our version of the FTC 
it is not hard to see that the left/right/midpoint approaches yield the same integrals, using variants of the same proof.
This might even be a good exercise for interested students.
At this point in the course, the teacher could mention general Riemann sums
and explain that a more complicated proof, but in fact based on more or less
Leibniz's idea, would give us an FTC that holds for continuous functions,
and that the primitives in this case would have to be constructed via the integral.

Another rather obvious objection to our version of the FTC is that
it does not hold for functions $f$ with unlimited derivative on the interval.
So for instance, we would not be able to calculate $\int_0^1 3 \sqrt{x} \ dx$
using our FTC. However, again, this raises some interesting questions.
For instance, is the function $f(x) = 3 \sqrt{x}$ integrable on $[0,1]$? Using computer calculations,
the students would get $I_{10} = 1.83153$, $I_{100} = 1.98438$, $I_{1000} = 1,99848$
and $I_{10000} = 1.99985$, which strongly indicates that it is integrable and that $\int_0^1 3 \sqrt{x} \ dx = 2$.
The teacher might then make the valid calcu\-lation $\int_{\epsilon}^1 3 \sqrt{x} \ dx =
2 - 2 \epsilon^{3/2}$, for $\epsilon > 0$, and ask if this might be used somehow
to find the value of the integral over the interval $[0,1]$ using a limiting procedure?
Once we have treated the case with unbounded derivatives, we are naturally lead
to other types of generalized integrals such as unbounded functions
and integrals over unbounded inter\-vals.

Another common objection to our approach is that the integral concept ought to be defined
so that the equation $\int_a^b f(x) \ dx + \int_b^c f(x) \ dx = \int_a^c f(x) \ dx$ always holds for
integrable functions $f(x)$. Now, if we let $1_{\mathbb{Q}}$ denotes the {\it indicator function}
on $[0,1]$, that is $1_{\mathbb{Q}}(x) = 1$, if $x \in \mathbb{Q} \cap [0,1]$, and
$1_{\mathbb{Q}}(x) = 0$, otherwise, then it follows that 
$\int_0^1 1_{\mathbb{Q}}(x) \ dx = 1$ but 
$\int_0^{\sqrt{2}-1} 1_{\mathbb{Q}}(x) \ dx + 
\int_{\sqrt{2}-1}^1 1_{\mathbb{Q}}(x) \ dx = 0$,
since $\sqrt{2}-1$ is an irrational number.
On the other hand, this example does not speak in favour of the Riemann integral,
simply because $1_{\mathbb{Q}}$ is {\it not} Riemann integrable.
However, $1_{\mathbb{Q}}$ is Lebesgue integrable with integral equal to zero
(see for instance \cite[p. 264]{apostol1974}).

\section*{acknowledgment}
The author thanks Jacques G\'{e}linas for providing refe\-rences 
to published material after reading a first version of this article,
including \cite{thompson1989} where the Taylor's formula is proved using 
the FTC, which makes these two results equivalent.

\vfill\eject

\end{document}